\newtheorem{theorem}{Theorem}[section]
\newtheorem{lemma}[theorem]{Lemma}
\theoremstyle{definition}
\newtheorem{definition}[theorem]{Definition}
\def\dotminussym#1#2{%
  \setbox0=\hbox{$\m@th#1-$}%
  \kern.5\wd0%
  \hbox to 0pt{\hss\hbox{$\m@th#1-$}\hss}%
  \raise.6\ht0\hbox to 0pt{\hss$\m@th#1.$\hss}%
  \kern.5\wd0}
\mathchardef\mhyphen="2D
\newcommand{\fn}[1]{\widehat{\mathbf{#1}}}
\begin{document}

\title[A Lower Bound on the James Space]{An Inverse Ackermannian Lower Bound on the Local Unconditionality Constant of the James Space}
\author{Henry Towsner}
\date{\today}

\begin{abstract}
The proof that the James space is not locally unconditional appears to be non-constructive, since it makes use of an ultraproduct construction.  Using proof mining, we extract a constructive proof and obtain a lower bound on the growth of the local unconditionality constants.
\end{abstract}

\maketitle



\section{Introduction}

The failure of local unconditionality of the James space has been given \cite{16329} as an example of a theorem whose only known proof requires an ultraproduct argument (or, equivalently, nonstandard analysis).  Recent developments in proof mining provide new insight into the relationship between standard and nonstandard proofs \cite{MR2964881}.  In particular, proof mining techniques can be used to ``extract'' a standard proof from a nonstandard one---that is, they provide a syntactic transformation which converts nonstandard proofs to standard ones.

In this paper with give a concrete example of these developments: a proof of the failure of local unconditionality of the James space which is explict and constructive, and which provides the first (but very slow) lower bound on the local unconditionality constant of subspaces of the James space.  This illustrates the following features of the modern understanding of the role of ultraproducts in proofs:
\begin{itemize}
\item Proofs of standard theorems which use ultraproducts can be systematically converted to explicit, constructive proofs which do not make use of ultraproducts.
\item The main reason ultaproduct methods simplify proofs is that they allow the use of non-constructive theorems which have high \emph{quantifier complexity}.  In this example, the crucial step is a statement about the exchange of the order of a double limit.
\item The functional interpretation provides a way to interpret statements about the ultraproduct as more complicated statements about the original space.  
\end{itemize}

The usual proof of the local unconditionality of the James space has a measure theoretic flavor (in particular, it makes use of some of the theory of $L_1$-space).  To produce a constructive proof, we need a finitary version of these measure theoretic notions.  The basic methods to do this were developed in the context of ergodic theory, most notably Tao's quantitative ergodic theory \cite{tao06}.  The most important idea is replacing convergence of limits with \emph{metastable convergence} as introduced in \cite{tao:MR2408398,avigad:MR2550151}.  Formally, a sequence converges when
\[\forall\epsilon\exists n_\epsilon\forall M\geq n |a_{n_\epsilon}-a_M|<\epsilon.\]
The metastable version of this statement is
\[\forall\epsilon, F\exists n_{\epsilon,F}\forall m\in[n_{\epsilon,F},F(n_{\epsilon,F})]|a_{n_{\epsilon,F}}-a_m|<\epsilon.\]
That is, given an accuracy $\epsilon$ and a function $F$, we can find an interval $[n,F(n)]$ on which the sequence is close to stable.  The key point is that, in general, $n_\epsilon$ may not be computable from $\epsilon$, but $n_{\epsilon,F}$ usually is computable from $\epsilon$ and $F$.  Importantly, if we use the convergence of a sequence as an intermediate step in a proof, the actual bounds on our final theorem depend only on $n_{\epsilon,F}$ for a suitably chosen $F$.

The general relationship between statements and their constructive versions is given by the proof theoretic functional interpretation \cite{avigad:MR1640329,kohlenbach:MR2445721}.  In particular, the functional interpretation tells us how to convert more complicated statements into the corresponding constructive versions.  The precise technique used to produce the results in this paper is an informal version of Kohlenbach's monotone functional interpretation \cite{MR1195271}.  In a companion paper, \cite{towsner_worked}, we give an exposition of the motivating ideas in the context of a simpler example.

After introducing the definition of the James space and local unconditionality in Section \ref{sec:definitions}, we outline the usual ultraproduct proof of the local unconditionality of the James space.  In Section \ref{sec:discussion} we return to the issue of metastability, and describe the version of metastability we need for the particular convergence notions which turn up in the proof.  In Section \ref{sec:proof} we give the actual proof of local unconditionality.


\section{The James Space}\label{sec:definitions}

\begin{definition}
  The \emph{James space} $J$ \cite{james_1951} consists of infinite sequences $(\alpha_n)$ of real numbers such that:
  \begin{enumerate}
  \item $||(\alpha_n)||_J=\frac{1}{\sqrt{2}}\sup[(\alpha_{p_1}-\alpha_{p_2})^2+(\alpha_{p_2}-\alpha_{p_3})^2+\cdots+(\alpha_{p_{m-1}}-\alpha_{p_m})^2+(\alpha_{p_m}-\alpha_{p_1})^2]^{1/2}$ is bounded, where the supremum ranges over all $m$ and all sequences $p_1<p_2<\cdots<p_m$, and
  \item $\lim_n a_n=0$.
  \end{enumerate}

  The canonical basis for $J$ consists of the vectors $\{e_i\}_{i\in\mathbb{N}}$ where $e_i$ is the sequence $(0,\ldots,0,1,0,\ldots)$ where the $1$ occurs in the $i$-th position.  We write $e^*_i$ for the corresponding dual functionals, so $e^*_j(e_i)=1$ if $i=j$ and $0$ otherwise.
\end{definition}
Note that $||e_i||_J=||e^*_i||_{J^*}=1$.  For each $i$, we define $d_i=\sum_{j\leq i}e_j=(1,\ldots,1,0,\ldots)$.  The sequence $(d_i)$ provides a prototypical example of a sequence in $J$ which is weakly Cauchy but not weakly convergent.

\begin{definition}
If $(c_i)$ is a basis for a Banach space $X$, the \emph{unconditional constant} for $(c_i)$ is the supremum of
\[\frac{||\sum_i \epsilon_i \alpha_i c_i||_X}{||\sum_i \alpha_i c_i||_X}\]
where each $\epsilon_i\in\{-1,1\}$.  The unconditional constant for $X$, $ub(X)$, is the infimum of the unconditional constants of any basis of $X$.
\end{definition}
The definitions above make sense for both finite and infinite dimensional $X$ if we allow for the possibility that the unconditional constant is infinite in the infinite dimensional case.

\begin{definition}
  A Banach space $X$ has \emph{local unconditional structure} if there is a constant $B$ such that every finitely generated subspace of $X$ has a basis with unconditional constant $B$.
\end{definition}

Our main interest is the following theorem:
\begin{theorem}\label{thm:james_noneffective}
  The James space does not have local unconditional structure.
\end{theorem}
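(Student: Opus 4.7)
The plan is to argue by contradiction using the summing sequence $d_i = e_1 + \cdots + e_i$. This sequence is weakly Cauchy in $J$ but not weakly convergent: any weak limit $(\alpha_n)$ would have to satisfy $\alpha_j = \lim_i e_j^*(d_i) = 1$ for every $j$, contradicting $\lim_n \alpha_n = 0$. Suppose toward a contradiction that every finite-dimensional subspace of $J$ has a basis with unconditional constant bounded by some fixed $B < \infty$.

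The strategy is to pass to an ultraproduct where the uniform local unconditional structure coalesces into a global unconditional structure. Let $\mathcal{U}$ be a free ultrafilter on $\mathbb{N}$ and form $\tilde{J} = \prod_{\mathcal{U}} J$. For each $n$, fix a $B$-unconditional basis of $F_n := \operatorname{span}\{d_1, \ldots, d_n\}$. Let $\tilde{F} \subseteq \tilde{J}$ be the closed subspace generated by the diagonal images $\tilde{d}_i$ of the $d_i$. The uniform bound $B$ on the unconditional constants of the chosen bases of $F_n$ transfers through the ultraproduct, so a suitable basic sequence in $\tilde{F}$ representing the $\tilde{d}_i$ is $B$-unconditional.

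The next step is to invoke the classical Banach-space theoretic fact that drives the usual proof: a space with an unconditional basis that does not contain a copy of $c_0$ is weakly sequentially complete. Classically this is proved by factoring through an $L_1$-space and applying dominated convergence, which yields weak sequential completeness for free. Since $J$ contains no copy of $c_0$, neither does $\tilde{F}$ (otherwise one could pull back finite $c_0$-substructures with uniform constants into $J$ itself, giving a $c_0$-copy in $J$). Thus the weakly Cauchy sequence $(\tilde{d}_i)$ is weakly convergent in $\tilde{F}$, say to some $\tilde{d}_\infty$.

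Finally, one transfers this weak limit back to $J$: by the ultraproduct construction, $\tilde{d}_\infty$ provides, after taking standard parts of functional values, a weak limit of $(d_i)$ in $J$, contradicting the first paragraph. The main obstacle---and the step that must be made quantitative to obtain an explicit lower bound---is the ``no $c_0$ implies weakly sequentially complete'' principle, which is the measure-theoretic $L_1$-style argument alluded to in the introduction. Its high quantifier complexity forces the use of a metastable substitute for weak sequential completeness in the constructive version; tracing the dependence of the metastable bound on the data $B$ and on the rate functional is what ultimately produces the inverse Ackermannian growth of the unconditional constants.
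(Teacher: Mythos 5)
Your high-level framing---pass to an ultrapower where the uniform local unconditional structure upgrades to a global one, and then use the summing sequence $(d_i)$ to derive a contradiction---is the right starting point and matches the paper's opening move. But as written the argument has three gaps, and the third is fatal to the proposed contradiction.

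First, the hypothesis gives a $B$-unconditional basis of each $F_n$, not that the $d_i$ themselves are unconditional; what passes to the ultrapower is the basis of $\prod_{\mathcal U}F_n$, which does not restrict to a basis of your $\tilde F$. The correct transferable structure is a Banach \emph{lattice} structure on $\prod_{\mathcal U}F_n$ in which $\tilde F$ sits as the sublattice generated by the $\tilde d_i$ (this is the Johnson--Tzafriri input the paper cites); there is no reason $\tilde F$ should have an unconditional basis. Second, your ``no $c_0$ in $\tilde F$'' step does not follow from the pull-back you describe: pulling $c_0$-substructures with uniform constants back into $J$ only shows $c_0$ is \emph{finitely representable} in $J$, which is true of every infinite-dimensional Banach space and does not contradict $J\not\supseteq c_0$. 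Third, and most importantly, obtaining a weak limit $\tilde d_\infty$ of $(\tilde d_i)$ in $\tilde F$ is not by itself a contradiction. The sequence $(d_i)$ already converges weak-* in $J^{**}$ to the ``all-ones'' element, so the mere existence of a limit in a space larger than $J$ is harmless; your ``transfer back to $J$'' step is precisely what would have to be justified and there is no mechanism for it. The paper's actual contradiction does \emph{not} come from transporting a weak limit back to $J$: it comes from lattice-embedding the sublattice generated by $(d_i)$ into $L_1(\Omega)$ via $\pi,\pi^*$, noting that $\pi(d_n)\to f_\infty$ and $\pi^*(e^*_p)\to g_\infty$ weakly (this is where weak sequential completeness of $L_1$ appears, rather than the James $c_0$-criterion you invoke), and then exchanging the iterated limits of $\int\pi(d_n)\pi^*(e^*_p)\,d\mu$, which evaluate to $0$ in one order and to something bounded below in the other. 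That double-limit exchange is the contradiction mechanism, and it is exactly the step whose metastable version the paper quantifies to get the inverse Ackermannian bound. Without some substitute for it, weak sequential completeness alone does not close the argument.
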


The proof (we follow the outline from \cite{MR1474498}) comes from standard facts about the James space (as described in \cite{MR1474498} or \cite{MR0500056}) and two results, one which seems to have first appeared in \cite{MR540367}, though it is due to Johnson and Tzafriri, and the second from \cite{MR0367624}.  (The proof given in the latter uses an argument based on the Hahn-Banach theorem rather than an ultraproduct construction, though the underlying idea is the same.  A proof using ultraproducts explicitly is given, for instance, in \cite{MR1342297}.)

Those two results are quite general, so when specialized to the case of proving the James space does not have local unconditional structure, the proof simplifies:
\begin{proof}[Proof sketch]
  Suppose the James space had a local unconditional constant $B$.  The ultrapower of a space with local unconditional constant is isomorphic to a Banach lattice, so we consider the ultrapower $J^{\mathcal{U}}$ as a Banach lattice.  Consider the Banach lattice closure of the $(d_i)$; call this $X$.  $X$ is separable and isomorphic (as a Banach lattice) to a subspace of $L_1(\Omega)$ for some meausure space $\Omega$.  Let $\pi:X\rightarrow L_1(\Omega)$ be the corresponding injection and let $\pi^*:X^*\rightarrow L_1(\Omega)$ the corresponding dual; note that the range of $\pi^*$ is contained in the dual of range of $\pi$.  In particular, $y^*(x)=\int\pi^*(y^*)\pi(x)d\mu$.

The $L_1$ functions $\pi(d_n)$ converge weakly to some function $f_\infty$ while the functions $\pi^*(e^*_p)$ converge weakly to some function $g_\infty$.  Crucially, the products $\lim_n (\pi(d_n)\pi^*(e^*_p))$ and $\lim_p(\pi(d_n)\pi^*(e^*_p))$ also converge weakly, so the limits exchange:
\begin{align*}
\lim_n \lim_p \int f_n g_p d\mu=&\lim_n \int f_n g_\infty d\mu=\int f_\infty g_\infty d\mu\\
=&\lim_p \int f_\infty g_p d\mu=\lim_p \lim_n \int f_n g_p d\mu.
\end{align*}

But we have
\[\int\pi^*(e^*_p)\pi(d_n)d\mu=\left\{\begin{array}{ll}
1&\text{if }p\leq n\\
0&\text{if }p>n
\end{array}\right.,\]
so $\lim_n \lim_p \int f_n g_p d\mu=0$ while $\lim_p \lim_n \int f_n g_p d\mu=1$, a contradiction.  Therefore our assumption of local unconditionality was false.
\end{proof}

This proof appears to be non-constructive---it tells us that for each constant $B$ there is a sufficiently big finitely generated subspace $X$ of $J$ so that $ub(X)>B$, but it does not tell us what $X$ is, or how big it must be.  Perhaps surprisingly, the techniques in this proof are intrinsically constructive, but conceal the underlying quantitative information (with the benefit of substantially simplifying the proof).  Below we make this quantitative information explicit.


\section{Proof Mining and Ultraproducts}\label{sec:discussion}

We do not need the literal existence of the functions $f_\infty$ and $g_\infty$ to complete the proof; we really only need the fact that we can exchange the order of the limits in the double limit.  Specifically, we need the following theorem:
\begin{theorem}\label{thm:meta}
  Let $(f_n)_n$ and $(g_p)_p$ be sequences of $L^1$ functions such that
  \begin{itemize}
  \item the sequences $(f_n)_n$ and $(g_p)_p$ converge weakly,
  \item all the functions $f_ng_p$ are $L_1$,
  \item for each fixed $n$, the sequence $(f_ng_p)_p$ converges weakly, and
  \item for each fixed $p$, the sequence $(f_ng_p)_n$ converges weakly.
  \end{itemize}
Then
\[\lim_n\lim_p\int f_ng_p\, d\mu=\lim_p\lim_n\int f_ng_p\, d\mu.\]
\end{theorem}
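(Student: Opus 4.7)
The plan is to reduce the double-limit exchange to a statement about bounded functions via truncation, then handle the bounded case using the weak-convergence hypotheses directly.

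First, I would record the standard consequences of weak $L^1$ convergence for $(f_n)$ and $(g_p)$: both are bounded in $L^1$-norm (Banach--Steinhaus) and uniformly integrable (Dunford--Pettis). Denote their weak limits by $f_\infty$ and $g_\infty$. The weak limits $h_n := \mathrm{w}\text{-}\lim_p f_n g_p$ and $k_p := \mathrm{w}\text{-}\lim_n f_n g_p$ exist in $L^1$ by hypothesis, and pairing them against the constant function $1 \in L^\infty$ shows that the inner limits $\alpha_n := \lim_p \int f_n g_p\,d\mu = \int h_n\,d\mu$ and $\beta_p := \lim_n \int f_n g_p\,d\mu = \int k_p\,d\mu$ exist.

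Next, I would truncate. For each level $M$, set $f_n^M := f_n \mathbf{1}_{\{|f_n|\leq M\}}$; since $f_n^M \in L^\infty \cap L^1$, multiplication by $f_n^M$ is weak-to-weak continuous on $L^1$, so $g_p \rightharpoonup g_\infty$ yields $\int f_n^M g_p\,d\mu \to \int f_n^M g_\infty\,d\mu$ as $p\to\infty$. The truncation error $\int_{\{|f_n|>M\}} f_n g_p\,d\mu$ is controlled in $p$ for fixed $n$ by the uniform integrability of the weakly convergent sequence $(f_n g_p)_p$. A symmetric truncation of $g_p$ and a standard $3\varepsilon$-argument reduce the problem to the exchange $\lim_n \int f_n^M g_\infty\,d\mu = \lim_p \int f_\infty g_p^M\,d\mu$, both of which equal $\int f_\infty g_\infty\,d\mu$ (interpreted through a further truncation if necessary) by weak convergence paired against bounded test functions.

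The main obstacle is achieving uniformity in \emph{both} indices simultaneously. The per-row uniform integrability of $(f_n g_p)_p$ for fixed $n$, together with the analogous per-column bound, does not automatically yield joint uniform integrability of the double family $\{f_n g_p\}_{n,p}$, so the truncation error need not be uniformly small as $(n,p)$ range freely. Overcoming this requires extracting joint weak relative compactness of the family, plausibly through Eberlein--Smulian applied to the sequential weak closure combined with a diagonal extraction; this is the point at which both weak-convergence hypotheses for the products are genuinely used, rather than just the per-variable weak convergence of $(f_n)$ and $(g_p)$ alone.
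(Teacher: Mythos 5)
The paper does not actually give a standalone proof of Theorem~\ref{thm:meta}: it is stated as a scaffolding lemma that captures the limit-exchange step of the ultraproduct argument, and the paper's real work is done through the weaker finitary statement Theorem~\ref{thm:meta2} and the quantitative Theorem~\ref{thm:cited}, which is cited from a companion paper. Crucially, Theorem~\ref{thm:cited} has much stronger hypotheses than Theorem~\ref{thm:meta}---explicit $L^\infty$-type uniform integrability bounds $\mu(\sigma)<\epsilon/B2^n\Rightarrow\int_\sigma|f_n|<\epsilon$ and an explicit bounded-fluctuations modulus $8B^2\lceil 1/\epsilon\rceil^2$---and these are exactly what supply the joint control over both indices simultaneously. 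So the paper sidesteps the generality of Theorem~\ref{thm:meta} entirely; it never establishes the bare weak-convergence version.

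Your proposal tries to prove the general version directly, and you have correctly located the pressure point, but you have not closed it, and I do not believe the suggested fix works. For fixed $n$, Dunford--Pettis applied to the weakly convergent column $(f_ng_p)_p$ gives a modulus of uniform integrability, but that modulus depends on $n$; symmetrically the row moduli depend on $p$. Row-wise plus column-wise uniform integrability of a doubly indexed family does not imply joint uniform integrability, and your invocation of Eberlein--Smulian ``applied to the sequential weak closure combined with a diagonal extraction'' does not produce it either: Eberlein--Smulian converts weak compactness to sequential weak compactness, but you first need weak relative compactness of the \emph{joint} family $\{f_ng_p\}_{n,p}$, which is precisely the joint uniform integrability you are trying to derive. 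Diagonalization can produce a single weakly convergent subsequence $f_{n_k}g_{p_k}$, but that is not enough to control the off-diagonal integrals $\int f_ng_p$ for $n,p$ ranging independently, which is what the iterated limit exchange is about. Separately, in your ``$3\varepsilon$'' reduction you assert that $\lim_n\int f_n^M g_\infty$ and $\lim_p\int f_\infty g_p^M$ both equal $\int f_\infty g_\infty$; the first is fine once you also truncate $g_\infty$ (since $\int f_n g_\infty^N\to\int f_\infty g_\infty^N$ by weak convergence of $(f_n)$ against the $L^\infty$ test function $g_\infty^N$, and the inner truncation error is controlled uniformly in $n$ by uniform integrability of $(f_n)$), but you still have to reconcile this double-truncation picture with the $n$-dependent truncation level $M(n)$ you implicitly introduced in the first step to control $\int_{\{|f_n|>M\}}f_ng_p$. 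As written, there is no single parameter $M$ that simultaneously tames the column truncation error (which needs $M\geq M(n)$, with $M(n)$ potentially unbounded) and permits passing to the limit in $n$. In short: the gap you flag is genuine, the proposed mechanism for closing it is not adequate, and the paper's own route deliberately avoids this gap by working from the concrete quantitative hypotheses available in the James-space setting rather than from weak convergence alone.
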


For our actual application, we do not even need the existence of the limits.  We need only:
\begin{theorem}\label{thm:meta2}
  Let $(f_n)_n$ and $(g_p)_p$ be sequences of $L^1$ functions such that
  \begin{itemize}
  \item the sequences $(f_n)_n$ and $(g_p)_p$ converge weakly,
  \item all the functions $f_ng_p$ are $L_1$,
  \item for each fixed $n$, the sequence $(f_ng_p)_p$ converges weakly, and
  \item for each fixed $p$, the sequence $(f_ng_p)_n$ converges weakly.
  \end{itemize}
Then for every $n$, $p$, and $\epsilon>0$, there exist $m\geq n$ and $q\geq p$ such that for all $l\geq m$ and $r\geq q$, there exist $k\geq l$ and $s\geq r$ such that
\[\left|\int f_mg_s\,d\mu-\int f_lg_q\,d\mu\right|<\epsilon.\]
\end{theorem}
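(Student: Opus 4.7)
The plan is to derive Theorem \ref{thm:meta2} as a routine finitization of Theorem \ref{thm:meta}, by combining the existence of the iterated limits with the scalar convergence of $\int f_n g_p\, d\mu$ along each row and column. Write $F(n, p) := \int f_n g_p\, d\mu$. Testing the weak convergence of $(f_n g_p)_p$ in $L^1$ against the constant function $1 \in L^\infty$ shows that $a_n := \lim_p F(n, p)$ exists for each $n$, and symmetrically $b_p := \lim_n F(n, p)$ exists for each $p$. Theorem \ref{thm:meta} then supplies a common value $L := \lim_n a_n = \lim_p b_p$.

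Given $n$, $p$, and $\epsilon > 0$, I would first choose $q \geq p$ with $|b_q - L| < \epsilon/4$, using convergence of $b_p$ to $L$. With $q$ fixed, I would then choose $m \geq n$ large enough to satisfy simultaneously
\[|a_m - L| < \epsilon/4 \quad \text{and} \quad |F(l, q) - b_q| < \epsilon/4 \text{ for every } l \geq m.\]
The first condition uses $a_n \to L$, and the second is a restatement of $\lim_l F(l, q) = b_q$, which is available because $q$ is already fixed. This pair $(m, q)$ is the witness required by the outer existential.

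Given $l \geq m$ and $r \geq q$, I would take $k := l$ (the $k$ in the statement plays no role in the displayed inequality, so any $k \geq l$ works) and choose $s \geq r$ with $|F(m, s) - a_m| < \epsilon/4$, using $\lim_s F(m, s) = a_m$. The four-term triangle inequality
\[|F(m, s) - F(l, q)| \leq |F(m, s) - a_m| + |a_m - L| + |L - b_q| + |b_q - F(l, q)|\]
then bounds the left-hand side by $\epsilon$, yielding the conclusion.

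There is no substantive obstacle; the content of the theorem lies entirely in Theorem \ref{thm:meta}. The only point worth flagging is the order of selection: $q$ must be chosen before $m$, because the threshold beyond which $F(\cdot, q)$ has $\epsilon/4$-stabilized depends on $q$ itself, and the conclusion demands a uniform estimate of $|F(l, q) - b_q|$ over all $l \geq m$.
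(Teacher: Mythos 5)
The paper does not actually give a proof of Theorem \ref{thm:meta2}: it is stated immediately after Theorem \ref{thm:meta} with the remark that ``we do not even need the existence of the limits,'' i.e., it is presented as a logically weaker corollary of Theorem \ref{thm:meta} and left to the reader. Your proof is the natural way to make that implication precise, and it is correct. The observations you make are exactly the right ones: pairing the weak convergence of $(f_ng_p)_p$ and $(f_ng_p)_n$ against the function $1\in L^\infty$ yields the existence of $a_n=\lim_p F(n,p)$ and $b_p=\lim_n F(n,p)$; Theorem \ref{thm:meta} then provides the common iterated limit $L$; and the $\epsilon/4$ four-term telescope gives the estimate. You are also right that $k$ is a dummy variable (it never appears in the displayed inequality; it is carried along only because it will acquire content in the functional interpretation later in Section \ref{sec:discussion}), so $k:=l$ is fine. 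The flag about the order of selection---$q$ before $m$, because the stabilization threshold of $F(\cdot,q)$ depends on $q$---is exactly the point that makes the quantifier structure $\exists m,q\,\forall l,r\,\exists s$ achievable rather than vacuous, and is worth stating. In short: correct, and essentially the only sensible route given that the paper defers both Theorems \ref{thm:meta} and \ref{thm:meta2} to the literature.
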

Note that this is a satement with high quantifier complexity: the conclusion has four blocks of alternating quantifiers---in logical notation, it is the following sentence
\[\forall n, p,\epsilon\,\exists m\geq n, q\geq p\,\forall l\geq m,r\geq q\,\exists k\geq l,s\geq r\ \left|\int f_mg_s\,d\mu-\int f_lg_q\,d\mu\right|<\epsilon,\]
which we will call $\sigma$.

In our context, we apply this theorem to an ultraproduct of the finite dimensional subspaces of the James space.  Since this theorem is a true statement about the ultraproduct, there should be some fact about the finite dimensional subspaces of the James space corresponding to it.  It turns out that this is precisely what the metastable version of the statement does for us.  The metastable version of $\sigma$ is:
\begin{quotation}
  For every $\epsilon>0, p, n, \fn{k},\fn{r}$ there are $m\geq n, q\geq p$, $\fn{l},\fn{s}$ such that, if $\fn{k}(m,q,\fn{l},\fn{s})\geq m$ and $\fn{r}(m,q,\fn{l},\fn{s})\geq q$, then:
  \begin{itemize}
  \item $\fn{s}(\fn{k}(m,q,\fn{l},\fn{s}),\fn{r}(m,q,\fn{l},\fn{s}))\geq \fn{r}(m,q,\fn{l},\fn{s})$,
  \item $\fn{l}(\fn{k}(m,q,\fn{l},\fn{s}),\fn{r}(m,q,\fn{l},\fn{s}))\geq \fn{k}(m,q,\fn{l},\fn{s})$,
  \item $\left|\int f_m g_{\fn{s}(\fn{k}(m,q,\fn{l},\fn{s}),\fn{r}(m,q,\fn{l},\fn{s}))}\,d\mu - \int f_{\fn{l}(\fn{k}(m,q,\fn{l},\fn{s}),\fn{r}(m,q,\fn{l},\fn{s}))} g_q\,d\mu\right|<\epsilon$.
  \end{itemize}
\end{quotation}
Notice that this statement has the form
\[\forall \vec x\,\exists \vec y\,\sigma^*(\vec x,\vec y)\]
where $\vec x=\epsilon,p,n,\fn{k},\fn{r}$ and $\vec y=m,q,\fn{l},\fn{s}$.

Then the relationship we have is that $\sigma$ is true in the ultraproduct exactly when
\begin{quotation}
  For every $\vec x$ there is a $\vec Y$ so that whenever $J_K$ is a $K$-dimensional subspace of $J$ with $K$ sufficiently large, there is a $\vec y\leq \vec Y$ such that $\sigma^*(\vec x,\vec y)$ is true in $J_K$.
\end{quotation}
In other words, the truth of $\sigma$ in the ultraproduct is equivalent to the ``uniform truth'' of $\sigma^*$ in the original structures.  (We avoid the technical issue of what it means to have $\vec y\leq \vec Y$ in general, given that $\vec Y$ involves functions \cite{MR1428007,MR2156133,MR2853725}, since we will only need a special case.)

$\sigma^*$ is already rather complicated; fortunately, we only need the case where $n=p=0$, $\fn{k}(m,q,\fn{l},\fn{s})=\max\{m,q+1\}$ and $\fn{r}(m,q,\fn{l},\fn{s})=\max\{m+1,q\}$.  In this case the conclusion becomes
\begin{quotation}
  For every $\epsilon>0$ there are $m, q$, $\fn{l},\fn{s}$ such that:
  \begin{itemize}
  \item $\fn{s}(\max\{m,q+1\},\max\{m+1,q\})\geq \max\{m,q+1\}$,
  \item $\fn{l}(\max\{m,q+1\},\max\{m+1,q\})\geq \max\{m+1,q\}$,
  \item $\left|\int f_mg_{\fn{s}(\max\{m,q+1\},\max\{m+1,q\})}\,d\mu - \int f_{\fn{l}(\max\{m,q+1\},\max\{m+1,q\})}g_q\,d\mu\right|<\epsilon$.
  \end{itemize}
\end{quotation}
Bounds on the sizes of the values $\fn{s}(\max\{m,q+1\},\max\{m+1,q\})$ and $\fn{l}(\max\{m,q+1\},\max\{m+1,q\})$ depend on the assumptions, however.  For instance, we expect the size of $\fn{s}$ to depend on how rapidly the sequences $(f_n)_n$ and $(g_p)_p$ converge.  More precisely, we expect the size of $\fn{s}$ to depend on the rate of metastable convergence.

It will turn out that the sequences we need converge in a very strong way: they have bounded fluctuations.\footnote{Saying that a sequence converges metastably is be equivalent to saying that a certain tree is well-founded.  Having bounded fluctuations is the strengthening in which the tree specifically has height at most $\omega$.  \cite{gaspar} considers a similar issue.}
\begin{definition}
  The sequence $(a_n)$ has \emph{bounded fluctuations with bound $f(\epsilon)$} if for every $\epsilon,\fn{m},n$ there is an $m\in[n,\fn{m}^{f(\epsilon)}(n)]$ such that whenever $k,k'\in[m,\fn{m}(k)]$, $|a_k-a_{k'}|<\epsilon$.
\end{definition}

This makes it possible to apply the main quantitative result from \cite{towsner:abs_cont}.  The result there is stated in terms of the \emph{fast-growing hierarchy} of functions:
\begin{itemize}
\item $f_0(n)=n+1$,
\item $f_{m+1}(n)=f^n_m(n)$.
\end{itemize}
The exponent means that $f_m$ is applied $n$ consecutive times to $n$, so $f_1(n)=2n$, $f_2(n)=2^nn$, and so on.  These functions grow very rapidly; in particular the function $f_\omega(m)=f_m(m)$ grows at roughly the same speed as the Ackermann function.  This lets us state:
\begin{theorem}\label{thm:cited}
Suppose that:
  \begin{itemize}
\item Each sequence $(f_ng_p)_p$ (for fixed $n$) and $(f_ng_p)_n$ (for fixed $p$) has bounded fluctuations with the uniform bound $8 B^2(\lceil 1/\epsilon\rceil)^2$,
\item   For each $n$ and any $\sigma\subseteq\Omega$ with $\mu(\sigma)<\epsilon/B 2^n$, $\int_\sigma|f_n|d\mu<\epsilon$,
\item   For each $p$ and any $\sigma\subseteq\Omega$ with $\mu(\sigma)<\epsilon/B 2^p$, $\int_\sigma|g_p|d\mu<\epsilon$,
\item For each $n$, $||f_n||_{L^1}\leq B$,
\item For each $p$, $||g_p||_{L^1}\leq B$,
\end{itemize}
Then for every $E$ there exist $m<s$ and $q<l$ such that:
  \begin{itemize}
  \item $s, l \leq f_{\omega}(2^{22}B^4\lceil 1/\epsilon\rceil^4+5)$, and
  \item $|(f_{m}g_{s})(\Omega)-(f_{l}g_{q})(\Omega)|<20\epsilon$.
  \end{itemize}
\end{theorem}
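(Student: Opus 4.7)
The plan is to derive Theorem \ref{thm:cited} as a quantitative upgrade of the $L^1$ double-limit exchange used in the proof sketch of Theorem \ref{thm:james_noneffective}, using bounded fluctuations in place of actual convergence. The first step is to rephrase the conclusion as a stabilization statement: we seek $m<s$ and $q<l$ at which $\int f_m g_s\,d\mu$ and $\int f_l g_q\,d\mu$ both sit within $O(\epsilon)$ of a common ``approximate'' limiting value. Because each of the families $(f_n g_p)_p$ (for fixed $n$) and $(f_n g_p)_n$ (for fixed $p$) has bounded fluctuations with the explicit bound $8B^2\lceil 1/\epsilon\rceil^2$, we have complete quantitative control of how often either family can jump by more than $\epsilon$, which is the strong form of metastability noted in the footnote.

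Next I would combine these two families of fluctuation bounds diagonally. For each candidate index $n$, bounded fluctuations in $p$ produce a window $[p_n,P_n]$ on which the row $(f_n g_p)_p$ varies by less than $\epsilon$; symmetrically, fluctuations in $n$ for the corresponding columns give windows $[n_p,N_p]$. Iteratively refining these windows, while each refinement chooses indices compatible with both directions of stability, produces the desired quartet $(m,q,s,l)$. The uniform integrability conditions $\int_\sigma |f_n|\,d\mu<\epsilon$ when $\mu(\sigma)<\epsilon/B2^n$ (and similarly for $g_p$) enter exactly here: they let us approximate each $\int f_n g_p\,d\mu$ by an integral over a common ``good'' set of bounded measure on which the two sequences are uniformly controlled, so that pointwise stability translates into integral proximity without mass escaping to small-measure sets as $n,p$ grow.

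The main obstacle is the bound bookkeeping. Each nested invocation of the bounded-fluctuations principle composes an iteration of the previous window-expanding function, and combining the two directions forces the nesting to climb through $\omega$ many levels, which is precisely where the fast-growing hierarchy appears and why an Ackermannian bound is unavoidable. The index $2^{22}B^4\lceil 1/\epsilon\rceil^4+5$ inside $f_\omega$ reflects the fluctuation count (quadratic in $B$ and $\lceil 1/\epsilon\rceil$, squared again by the two-directional interleaving) plus an additive constant absorbing the integrability bookkeeping and the $20\epsilon$ slack. Since the detailed combinatorics is the content of the main theorem of \cite{towsner:abs_cont}, my proof would reduce to verifying that the five hypotheses of Theorem \ref{thm:cited} satisfy the more abstract form used there, and then invoking that result to conclude with the stated explicit bound.
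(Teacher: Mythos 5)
Your proposal lands on the same approach the paper itself takes: Theorem~\ref{thm:cited} is not proved in this paper but is quoted as the main quantitative result of \cite{towsner:abs_cont}, so ``verify the hypotheses and invoke that theorem'' is exactly what the paper does, and your intervening intuitive sketch of the window-refining combinatorics is dispensable once you defer to the citation. The one thing worth adding is that the theorem as stated in \cite{towsner:abs_cont} carries an extra hypothesis (existence of partitions of $\Omega$ into approximate level sets for the $f_n$ and $g_p$) beyond the five you list; the paper observes this is trivially satisfied here because the $f_n,g_p$ are explicitly given as (simple) functions on the finite space $\Omega$, and your verification step should note the same.
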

The theorem as stated in \cite{towsner:abs_cont} has an additional technical condition regarding partitions into approximate level sets which is trivial in this case because $f_n,g_p$ are explicitly presented as functions.

\section{The James Space as a Finite Measure Space}\label{sec:proof}
\subsection{Bounds on Fluctuations}
Recall the sequence $d_i=\sum_{j\leq i}e_i=(1,\ldots,1,0,\ldots)$ in the James space where the first $i$ elements of the sequence are $1$ and the rest are $0$.  Since the sequence $(d_i)$ is weakly Cauchy, we should be able to obtain bounds on the metastable weak convergence of this sequence.  In this case we obtain an even stronger bound, which we will later convert this into a proof that the functions we are interested in have bounded fluctuations.
\begin{lemma}\label{thm:james_metaconvergence}
For any $\epsilon>0$, any $k\geq 2\lceil 1/\epsilon\rceil^2$, any $n_0<\cdots<n_{k}$ and any $y^*$ with $||y^*||_{J^*}\leq 1$, there is an $i<k$ so that $|y^*(d_{n_i})-y^*(d_{n_{i+1}})|< \epsilon$.
\end{lemma}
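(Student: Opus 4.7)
The plan is to argue by contradiction. Suppose that $|y^*(d_{n_{i+1}}) - y^*(d_{n_i})| \geq \epsilon$ for every $i < k$, and derive a contradiction with $\|y^*\|_{J^*} \leq 1$. Writing $a_i = y^*(d_{n_i})$ and $\epsilon_i = \operatorname{sign}(a_{n_{i+1}} - a_{n_i}) \in \{-1,+1\}$, the natural witness is the test vector
\[
v = \sum_{i=0}^{k-1} \epsilon_i\,(d_{n_{i+1}} - d_{n_i}),
\]
whose coordinate sequence is $0$ on $[1,n_0]$, constant equal to $\epsilon_i$ on each block $(n_i,n_{i+1}]$, and $0$ afterwards. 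By construction, $y^*(v) = \sum_{i=0}^{k-1}|a_{n_{i+1}} - a_{n_i}| \geq k\epsilon$, so the inequality $y^*(v) \leq \|y^*\|_{J^*}\,\|v\|_J \leq \|v\|_J$ reduces everything to proving an upper bound of $\sqrt{2k}$ on $\|v\|_J$.

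To bound $\|v\|_J$, I would analyze the cyclic sum in the definition of the James norm directly. Since $v$ is constant on each block and on the two "outside" regions, choosing two test points $p_j < p_{j+1}$ inside the same constant region contributes $0$; hence it suffices to consider cyclic selections $p_1 < \cdots < p_m$ taking at most one point per region. Each difference $(v_{p_j} - v_{p_{j+1}})^2$ is then bounded by $4$, and a case analysis (considering whether or not a zero-valued "outside" point is included, and noting that inserting a $0$ between adjacent $\pm 1$'s replaces a contribution of $4$ by two contributions of $1$ and therefore does not help) shows that the cyclic sum is maximized by picking exactly one point from each of the $k$ blocks, giving a total bounded by $4k$. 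Dividing by $2$ and taking the square root gives $\|v\|_J \leq \sqrt{2k}$.

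Combining the two bounds yields $k\epsilon \leq \sqrt{2k}$, i.e.\ $k \leq 2/\epsilon^2$. Since $k \geq 2\lceil 1/\epsilon\rceil^2 \geq 2/\epsilon^2$, the inequality is already tight, and when $1/\epsilon \notin \mathbb{Z}$ the strict inequality $\lceil 1/\epsilon\rceil^2 > 1/\epsilon^2$ provides the required contradiction. The main obstacle is therefore not the overall strategy but the tight combinatorial analysis of $\|v\|_J$: one needs to rule out that judicious inclusion of the zero-valued regions or of multiple points per block could inflate the cyclic sum beyond $4k$. Once that elementary optimization is pinned down, the rest of the argument is just assembling the two inequalities.
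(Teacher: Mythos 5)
Your test vector is different from the paper's, and the difference matters. The paper does not use the signed combination $v=\sum_i \epsilon_i(d_{n_{i+1}}-d_{n_i})$; instead it splits $\{0,\dots,k-1\}$ into the two sign classes $I^>$ and $I^<$, takes the one of size more than $k/2$, and forms the \emph{unsigned} sum $\hat x=\sum_{i\in I^<}(d_{n_{i+1}}-d_{n_i})$. This $\hat x$ is a $\{0,1\}$--valued sequence supported on at most $|I^<|$ blocks, so each squared difference in the cyclic sum is at most $1$, giving $\|\hat x\|_J\leq\sqrt{|I^<|}$. That wins a factor of $\sqrt 2$ over your bound, because for your $\pm 1$--valued $v$ the jumps can be of size $2$, and $\|v\|_J\leq\sqrt{2k}$ is genuinely sharp (alternating signs, $k$ even).

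That factor of $\sqrt 2$ is exactly the slack that your argument is missing, and you have identified the symptom but misdiagnosed the disease. You say the remaining obstacle is pinning down the combinatorial optimization showing the cyclic sum is at most $4k$; that step is in fact fine. The real gap is your final sentence: when $1/\epsilon\in\mathbb{Z}$, say $\epsilon=1/m$, the hypothesis only guarantees $k\geq 2m^2$, and $k\epsilon\leq\sqrt{2k}$ is then satisfied with equality at $k=2m^2$, so there is no contradiction. Since the lemma is stated for all $\epsilon>0$ and all $k\geq 2\lceil 1/\epsilon\rceil^2$, this is not an ignorable boundary case; the statement must also cover $\epsilon=1/m$, $k=2m^2$. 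The paper's single--sign construction yields $y^*(\hat x)/\|\hat x\|_J\geq\sqrt{|I^<|}\,\epsilon$ with $|I^<|>k/2$, and the strict majority is what produces the strict inequality $>1$ needed to contradict $\|y^*\|_{J^*}\leq 1$. If you want to salvage your version, you either need to tighten the $\|v\|_J$ bound below $\sqrt{2k}$ in the equality configuration (which cannot be done uniformly, since it is attained) or pass to the majority sign class as the paper does.
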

\begin{proof}
Towards a contradiction, suppose not, and let $\epsilon,y^*,n_0<\cdots<n_{k}$ be a counterexample.  We may divide the index set $I=[0,k)$ into two components, $I^>=\{i<k\mid y^*(d_{n_i})>y^*(d_{n_{i+1}})\}$ and $I^<=[0,k)\setminus I^>=\{i<k\mid y^*(d_{n_i})<y^*(d_{n_{i+1}})\}$.  Clearly we have either $|I^>|> k/2$ or $|I^<|> k/2$; without loss of generality, we assume $|I^<|> k/2$ (the other case is symmetric).

Set $\hat x=\sum_{i\in I^>}(d_{n_{i+1}}-d_{n_i})$.  So $\hat x$ is the sum of those $e_j$ such that $j\in\bigcup_{i\in I^>}(n_i,n_{i+1}]$.  Therefore $||\hat x||_J=\frac{1}{\sqrt{2}}\sqrt{2|I^>|}=\sqrt{|I^>|}$ while
\[y^*(\hat x)=\sum_{i\in I^>}y^*(d_{n_{i+1}})-y^*(d_{n_i})\geq |I^>|\epsilon.\]
But this means $y^*(\hat x)\geq\sqrt{|I^>|}\epsilon||\hat x||_J> \sqrt{\lceil 1/\epsilon\rceil^2}\epsilon||\hat x||_J\geq ||\hat x||_J$, contradicting the fact that $||y^*||\leq 1$.
\end{proof}

Analagously, consider the functional $e^*_\infty=\lim_{i\rightarrow\infty}e^*_i$; on $J$ this is of course the functional which is constantly $0$, but it becomes more useful on the ultrapower of $J$.  The fact that $e^*_\infty$ is actually well-defined on the ultrapower of $J$ is equivalent to the fact that the sequence $\lim_{i\rightarrow\infty}e^*_i$ converges metastably as in the following lemma:
\begin{lemma}\label{thm:james_dual_metaconvergence}
For any $\epsilon>0$, any $k\geq 2\lceil 1/\epsilon\rceil^2$, any $p_0<\cdots<p_{k}$ and any $x$ with $||x||_J\leq 1$, there is an $i<k$ so that $|e^*_{p_i}(x)-e^*_{p_{i+1}}(x)|< \epsilon$.
\end{lemma}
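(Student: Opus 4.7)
The plan is to prove this by contradiction along the same lines as Lemma \ref{thm:james_metaconvergence}, but the argument here is noticeably simpler: instead of manufacturing an auxiliary vector out of increments $d_{n_{i+1}} - d_{n_i}$ and measuring it against $y^*$, the conclusion follows by testing $x$ itself against the James norm on the given index sequence. The key identity is that $e^*_p(x) = \alpha_p$ when $x = (\alpha_n)$, so the quantities $|e^*_{p_i}(x) - e^*_{p_{i+1}}(x)|$ are literally the coordinate differences $|\alpha_{p_i} - \alpha_{p_{i+1}}|$, which are exactly the building blocks of the James norm.

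First I would suppose for contradiction that $|\alpha_{p_i} - \alpha_{p_{i+1}}| \geq \epsilon$ for every $i < k$. Then I would plug the strictly increasing sequence $p_0 < p_1 < \cdots < p_k$ directly into the supremum defining $\|\cdot\|_J$, giving
\[
\|x\|_J \;\geq\; \frac{1}{\sqrt{2}} \Bigl[\sum_{i=0}^{k-1}(\alpha_{p_i} - \alpha_{p_{i+1}})^2 + (\alpha_{p_k} - \alpha_{p_0})^2\Bigr]^{1/2}.
\]
Discarding the nonnegative cyclic term, the right-hand side is at least $\epsilon\sqrt{k/2}$, and since $k \geq 2\lceil 1/\epsilon\rceil^2$ this is at least $\epsilon\lceil 1/\epsilon\rceil \geq 1$, contradicting $\|x\|_J \leq 1$.

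The only subtle point is the analogue of the strict inequality used in the previous lemma: when $1/\epsilon$ is an integer the chain of inequalities above is only non-strict, so a genuine contradiction requires either passing to a slightly smaller $\epsilon' < \epsilon$ (which reduces to the strict case without changing the asymptotics), or observing that in any extremal configuration the cyclic term must contribute strictly. This is the only step that merits care; the remainder of the argument is a direct computation from the definition of the James norm and, unlike in Lemma \ref{thm:james_metaconvergence}, requires no pigeonhole split into sign-constant subsets because the $k$ consecutive squared differences are already exactly the terms one needs.
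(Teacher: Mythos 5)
Your proof is correct and takes the same approach as the paper's: test $x$ against the James norm on the given indices $p_0 < \cdots < p_k$ and observe that the resulting sum of squared differences forces $\|x\|_J$ to be at least $1$. Your remark about the strict-versus-nonstrict boundary is well taken; the paper sidesteps it by assuming $|e^*_{p_i}(x)-e^*_{p_{i+1}}(x)|>\epsilon$ (strict) in the negated hypothesis, which in effect proves the lemma with $\leq\epsilon$ in the conclusion rather than $<\epsilon$.
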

\begin{proof}
  Follows immediately from the definition of the norm $||\cdot||_J$: $x$ is some sequence $(x_n)$.  If $|e^*_{p_i}(x)-e^*_{p_{i+1}}(x)|>\epsilon$ for all $i<k$ then by definition
\begin{align*}
||x||_J
&\geq\frac{1}{\sqrt{2}}\sqrt{\sum_{i<k}(x_{p_i}-x_{p_{i+1}})^2}\\
&=\frac{1}{\sqrt{2}}\sqrt{\sum_{i<k}(e^*_{p_i}(x)-e^*_{p_{i+1}}(x))^2}\\
&>\frac{1}{\sqrt{2}}\sqrt{k\lceil 1/\epsilon\rceil^2}\\
&\geq1.
\end{align*}
\end{proof}

\subsection{A Finite Measure Space}

Suppose the subspace $J_K$ generated by $\{e_i\}_{i\leq K}$ has a basis $(\omega_i)_{i\leq K}$ with unconditional constant $B$.
(Our use of the letter $\omega$ presages the fact that we will mostly be concerned with viewing the $\omega_i$ as elements in a measure space.)  Let $\gamma^*_i$ be the dual functionals corresponding to this basis, so any $x\in J_K$ satisfies $x=\sum_i\gamma^*_i(x)\omega_i$.

We can view $(\omega_i)_{i\leq K}$ as inducing a Banach lattice structure, with $x\leq y$ if for each $i\leq K$, $\gamma^*_i(x)\leq \gamma^*_i(y)$.  We will not need this structure itself, but it motivates the following definitions.

For $x\in J_K$ we define $|x|=\sum_i|\gamma^*_i(x)|\omega_i$; since the $(\omega_i)$ are an unconditional basis, $||x||_J/B\leq ||\,|x|\,||_J\leq B||x||_J$.  Similarly, for $x^*\in J^*_K$ we define $|x^*|(x)=\sum_i\gamma^*_i(x)|x^*(\omega_i)|$; we have $||x^*||_{J^*}/B\leq||\,|x^*|\,||_{J^*}\leq B||x^*||_{J^*}$.  Note that if we choose $\epsilon_i\in\{-1,1\}$ so that for each $i$, $\epsilon_i\gamma^*_i(x)x^*(\omega_i)$ is non-negative then we may set $x'=\sum_i\epsilon_i\gamma^*_i(x)\omega_i$ and we have
\[0\leq |x^*|(|x|)=x^*(x')\leq ||x^*||_{J^*}||x'||_J\leq B||x^*||_{J^*}||x||_J.\]

We fix canonical elements $d=\sum_{j\leq K} 2^{-j-1}|d_j|$ and $d^*=\sum_{j\leq K}2^{-j-1} |e^*_j|$.  Observe that 
\[d^*(d)=\sum_{j,j'\leq K}2^{-j-j'-2}|e^*_j|(|d_{j'}|)\leq\sum_{j,j'\leq K}2^{-j-j'-2} B=B.\]
On the other hand,
\[d^*(d)\geq\sum_j 2^{-2j-2}|e^*_j(d_j)|\geq\frac{1}{4}.\]

We define a finite measure space, $(\Omega,\mu)$; we take $\Omega=\{\omega_i\}_{i\leq K}$, and since $\Omega$ is atomic, it suffices to define $\mu(\{\omega_i\})=\frac{\gamma^*_i(d)}{d^*(d)}d^*(\omega_i)$.  We have
\[\mu(\Omega)=\frac{1}{d^*(d)}\sum_i\gamma^*_i(d)d^*(\omega_i)=\frac{d^*(d)}{d^*(d)}=1.\]

We now define an embedding $\pi:J_K\rightarrow L_1(\Omega)$ by setting $\pi(x)$ to be the function
\[\sum_i\frac{d^*(d)}{\gamma^*_i(d)}\gamma^*_i(x)\chi_i\]
where $\chi_i$ is the characteristic function of the set $\{\omega_i\}$.  That is, $\pi(x)$ is the function which, at the point $\omega_i$, takes the value $\frac{d^*(d)}{\gamma^*_i(d)}\gamma^*_i(x)$.  This definition has the convenient property that
\[||\pi(x)||_1=\sum_i\frac{|\gamma^*_i(x)|d^*(d)}{\gamma^*_i(d)}\frac{\gamma^*_i(d)d^*(\omega_i)}{d^*(d)}=\sum_i|\gamma^*_i(x)|d^*(\omega_i)=d^*(|x|)\leq B||x||_J.\]
We also have an embedding $\pi^*:J_K^*\rightarrow L_1(\Omega)$ given by setting $\pi^*(x^*)$ to be the function
\[\sum_i\frac{d^*(d)}{d^*(\omega_i)}x^*(\omega_i) \chi_i.\]
Therefore
\begin{align*}
\int\pi^*(x^*)\pi(x)d\mu
&=\sum_i \frac{x^*(\omega_i)d^*(d)}{d^*(\omega_i)}\frac{\gamma^*_i(x) d^*(d)}{\gamma^*_i(d)}\frac{\gamma^*_i(d)d^*(\omega_i)}{d^*(d)}\\
&=\sum_i\gamma^*_i(x)x^*(\omega_i)d^*(d)\\
&=x^*(x)d^*(d)\\
&\in[x^*(x)/4,Bx^*(x)]
\end{align*}
and
\[||\pi^*(x^*)||_1=\sum_i\frac{|x^*(\omega_i)|d^*(d)}{d^*(\omega_i)}\frac{\gamma_i^*(d)d^*(\omega_i)}{d^*(d)}
=\sum_i|x^*(\omega_i)|\gamma^*_i(d)=|x^*|(d)\leq B||x^*||_{J^*}.\]

We define two sequences of $L^1(\Omega)$ functions: we set $f_n=\pi(d_n)$ and $g_p=\pi^*(e^*_p)$.  Note that for any $n$ we have $||f_n||_1\leq B$ and for any $p$, $||g_p||_1\leq B$.

We also have
\[||f_n||_\infty=\sup_i \left|\frac{\gamma^*_i(d_n)d^*(d)}{\gamma^*_i(d)}\right|\leq\sup_i\left|\frac{\gamma^*_i(d_n)d^*(d)}{2^{-n}\gamma^*_i(|d_n|)}\right|\leq B2^n\]
and
\[||g_p||_\infty=\sup_i\left|\frac{e^*_p(\omega_i)d^*(d)}{d^*(\omega_i)}\right|
\leq\sup_i\left|\frac{e^*_p(\omega_i)d^*(d)}{2^{-p}|e^*_p(\omega_i)|}\right|
\leq B2^p.\]

\subsection{Quantitative Convergence and Continuity}

\begin{lemma}
  For any fixed $p$, the sequence $(\rho_n\lambda_p)_n$ has bounded fluctuations with bound $8B^2\lceil 1/\epsilon\rceil^2$.
\end{lemma}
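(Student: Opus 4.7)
The plan is to express the fluctuations of $(\rho_n\lambda_p)_n$ (with $p$ fixed) as scalar fluctuations of a pairing $y^*(d_n)$ for a functional $y^*$ of bounded $J^*$-norm, and then to replay the signed-sum construction from Lemma~\ref{thm:james_metaconvergence}. The first step is the computation that, for each measurable $\sigma\subseteq\Omega$,
\[
\int_\sigma \rho_n\lambda_p\,d\mu = d^*(d)\cdot e^*_p(P_\sigma d_n),
\]
where $P_\sigma x=\sum_{\omega_i\in\sigma}\gamma^*_i(x)\omega_i$ is the coordinate projection onto the basis vectors in $\sigma$. Because $(\omega_i)$ has unconditional constant $B$, the standard signed-projection trick gives $\|P_\sigma\|_{J_K\to J_K}\leq B$, and combined with $\|e^*_p\|_{J^*}=1$ the renormalized functional $y^*_\sigma:=(d^*(d))^{-1}\int_\sigma\pi(\cdot)\lambda_p\,d\mu=e^*_p\circ P_\sigma$ satisfies $\|y^*_\sigma\|_{J^*}\leq B$, uniformly in $\sigma$.

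For the main argument I would argue by contradiction. Given $\epsilon,\hat m,n$, iterate $\hat m$ from $n$ to produce $n_0<n_1<\cdots<n_{f(\epsilon)}$ with $f(\epsilon):=8B^2\lceil 1/\epsilon\rceil^2$ and $n_{i+1}=\hat m(n_i)$. If no $n_i$ gives stability on $[n_i,n_{i+1}]$, each interval contains a pair $k_i<k_i'$ with $|y^*(d_{k_i})-y^*(d_{k_i'})|\geq\epsilon$; crucially, the intervals $(k_i,k_i']$ are pairwise disjoint. Pigeonholing on the sign of these differences gives an index set $I$ of size at least $f(\epsilon)/2=4B^2\lceil 1/\epsilon\rceil^2$ on which all the differences share a common sign; then $\hat x=\sum_{i\in I}(d_{k_i'}-d_{k_i})$ is a $0/1$ vector supported on $|I|$ disjoint blocks, so $\|\hat x\|_J=\sqrt{|I|}$ by the James-norm calculation in the proof of Lemma~\ref{thm:james_metaconvergence}, while $|y^*(\hat x)|\geq|I|\epsilon$. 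The bound $\|y^*\|\leq B$ then forces $|I|\leq B^2/\epsilon^2$, contradicting $|I|\geq 4B^2\lceil 1/\epsilon\rceil^2$.

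The main obstacle is that the fluctuation on each $[n_i,n_{i+1}]$ may \emph{a priori} be witnessed by a different $\sigma_i\subseteq\Omega$, whereas the signed-sum construction wants a single functional. Since $\|y^*_\sigma\|_{J^*}\leq B$ uniformly in $\sigma$, I expect to handle this either by fixing a worst-case $\sigma$ at the outset or by coalescing the $\sigma_i$'s through an additional pigeonhole; either route preserves the bound $8B^2\lceil 1/\epsilon\rceil^2$. All remaining steps are direct finite-dimensional shadows of the norm identity underlying Lemma~\ref{thm:james_metaconvergence}.
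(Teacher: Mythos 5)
Your approach matches the paper's: both express the fluctuation as the value of a functional of bounded $J^*$-norm applied to $d_n$, using the identity $\int_\sigma \pi(x)\lambda_p\,d\mu = d^*(d)\,e^*_p(P_\sigma x)$ and the bound $\|P_\sigma\|\leq B$ from unconditionality, and both then invoke the James-norm signed-sum estimate. The paper applies Lemma~\ref{thm:james_metaconvergence} as a black box via an adaptive iteration (at each step $m_{i+1}$ is taken to be the least $m\in[m_i,\fn{m}(m_i)]$ with a large fluctuation, or $\fn{m}(m_i)$ if none exists), whereas you iterate unconditionally and re-derive the signed-sum bound; these are equivalent, and the adaptive version has the small advantage of delivering a genuinely \emph{stable interval} after one triangle-inequality step, which is exactly what the definition of bounded fluctuations asks for.

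The one genuine misstep is your final paragraph. The ``main obstacle'' you identify --- that each interval might have its fluctuation witnessed by a different $\sigma_i$ --- does not arise, because in the bounded-fluctuations condition for these measure-valued sequences, $\sigma$ is among the given data, fixed \emph{before} the iteration begins. Indeed, the paper's proof opens with ``Let $\epsilon>0,\fn{m},n,\sigma$ be given,'' and the functional $y^*$ is built once from that single $\sigma$ and used throughout. There is nothing to coalesce and no extra pigeonhole is needed; the hedged ``I expect to handle this either by\dots'' leaves the argument unfinished at a point where, with the correct reading of the definition, it is already complete. You should also state explicitly that you pass to the monotone majorant $\fn{m}'(n)=\max_{n'\leq n}\fn{m}(n')$ so that $n_{i+1}=\fn{m}(n_i)$ really does give $n_{f(\epsilon)}\leq\fn{m}^{f(\epsilon)}(n)$, and track the normalization by $d^*(d)\leq B$ when translating $|y^*_\sigma(d_k)-y^*_\sigma(d_{k'})|$ into $\left|\int_\sigma(\rho_k-\rho_{k'})\lambda_p\,d\mu\right|$, since this is what introduces the $B^2$ factor into the fluctuation bound.
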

\begin{proof}
Let $\epsilon>0,\fn{m},n,\sigma$ be given.  Without loss of generality, assume $\fn{m}(n)\leq\fn{m}(n+1)$ for all $n$ (if this is not the case, we replace $\fn{m}$ with $\fn{m}'(n)=\max_{n'\leq n}\fn{m}(n')$).  

Consider the function $y^*(x)=\frac{1}{B}\int_\sigma\pi(x)\lambda_p\,d\mu$ and define a sequence inductively by $m_0=n$ and if there is any $m\in[m_i,\fn{m}(m_i)]$ such that $|y^*(d_{m_i})-y^*(d_m)|\geq \epsilon/2B$ then $m_{i+1}$ is the least such $m$, otherwise $m_{i+1}=\fn{m}(m_i)$.  By the monotonicity of $\fn{m}$, $m_{2B^2E^2}\leq\fn{m}^{2B^2E^2}(n)$.  By Lemma \ref{thm:james_metaconvergence} there is an $i<8B^2\lceil 1/\epsilon\rceil^2$ so that $|y^*(d_{m_i})-y^*(d_{m_{i+1}})|< \epsilon/2B$.  By the choice of $m_{i+1}$, it must be that $m_{i+1}=\fn{m}(m_i)$ and for every $k\in[m_i,\fn{m}(m_i)]$, $|y^*(d_{m_i})-y^*(d_k)|\geq\epsilon/2B$.

Suppose there were some $k,k'\in[m_i,\fn{m}(m_i)]$ so that $|y^*(d_k)-y^*(d_{k'})|\geq \epsilon/B$.  Then either $|y^*(d_{m_i})-y^*(d_k)|\geq\epsilon/2B$ or $|y^*(d_{m_i})-y^*(d_{k'})|\geq\epsilon/2B$.  But this is a contradiction.  Therefore for every $k,k'\in[m_i,\fn{m}(m_i)]$,
\[|\rho_{m_i}(\sigma)-\rho_m(\sigma)|=B|y^*(d_{k})-y^*(d_{k'})|<\epsilon.\]
\end{proof}

Similarly, using Lemma \ref{thm:james_dual_metaconvergence}, we obtain
\begin{lemma}
  For any fixed $n$, the sequence $(\rho_n\lambda_p)_p$ has bounded fluctuations with bound $8B^2\lceil 1/\epsilon\rceil^2$.
\end{lemma}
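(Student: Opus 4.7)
The plan is to adapt the preceding proof, interchanging the roles of the primal basis element and the dual functional. Where that argument packaged the integrals $\int_\sigma f_k \lambda_p\, d\mu$ as the values $B\, y^*(d_k)$ of a single functional $y^* \in J^*$ of bounded norm and then controlled the fluctuation in $k$ via Lemma \ref{thm:james_metaconvergence}, here we will package the integrals $\int_\sigma \rho_n g_{p'}\, d\mu$ as proportional to the values of $p' \mapsto e^*_{p'}(x)$ for a single element $x \in J_K$ of bounded norm, and control the fluctuation in $p'$ via the dual Lemma \ref{thm:james_dual_metaconvergence}.

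Fix $n$, $\epsilon > 0$, $\fn{m}$, $p$, and $\sigma$; as before, monotonize $\fn{m}$ without loss of generality. Let $P_\sigma : J_K \to J_K$ denote coordinate projection in the basis $(\omega_i)$, i.e., $P_\sigma(y) = \sum_{\omega_i \in \sigma} \gamma^*_i(y)\,\omega_i$. Since $(\omega_i)$ has unconditional constant $B$ and $\|d_n\|_J = 1$, one has $\|P_\sigma(d_n)\|_J \le B$, so the element $x := P_\sigma(d_n)/B$ satisfies $\|x\|_J \le 1$. A direct calculation from the formulas defining $\pi$, $\pi^*$, and $\mu(\{\omega_i\}) = \gamma^*_i(d)d^*(\omega_i)/d^*(d)$ yields
\[\int_\sigma f_n g_{p'}\, d\mu \;=\; d^*(d)\, e^*_{p'}(P_\sigma(d_n)) \;=\; B\, d^*(d)\, e^*_{p'}(x),\]
which is the exact analog of the identity $B\,y^*(d_k) = \int_\sigma f_k g_p\, d\mu$ that drove the previous proof.

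The inductive construction then transfers verbatim, but in the variable $p$: set $p_0 = p$, and let $p_{i+1}$ be the least index in $[p_i, \fn{m}(p_i)]$ where $|e^*_{p_i}(x) - e^*_{p_{i+1}}(x)| \geq \epsilon/(2B)$ if such exists, and $\fn{m}(p_i)$ otherwise. Lemma \ref{thm:james_dual_metaconvergence}, applied with accuracy $\epsilon/(2B)$ to the element $x$, requires only $2\lceil 2B/\epsilon\rceil^2 \le 8B^2 \lceil 1/\epsilon\rceil^2$ consecutive indices and so produces some $i$ within the bound at which the jump $|e^*_{p_i}(x) - e^*_{p_{i+1}}(x)|$ is less than $\epsilon/(2B)$. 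By the selection rule this forces $p_{i+1} = \fn{m}(p_i)$ together with $|e^*_{p_i}(x) - e^*_{p'}(x)| < \epsilon/(2B)$ for every $p' \in [p_i, \fn{m}(p_i)]$; a triangle inequality followed by the integral identity then delivers the claimed $\epsilon$-fluctuation of $\rho_n\lambda_{p'}$ on $\sigma$. The only substantive task beyond copying the previous proof is verifying the integral identity, which I expect to be the main obstacle since it requires carefully tracking the cancellations among the three normalizing factors $d^*(d)/\gamma^*_i(d)$, $d^*(d)/d^*(\omega_i)$, and $\gamma^*_i(d)d^*(\omega_i)/d^*(d)$ built into $\pi$, $\pi^*$, and $\mu$, and pairing this with the ``restriction-to-a-subset'' form of the unconditional constant bound applied to $P_\sigma(d_n)$.
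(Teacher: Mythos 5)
Your proposal matches the paper's intended argument exactly: the paper omits the proof of this lemma, writing only ``Similarly, using Lemma \ref{thm:james_dual_metaconvergence}, we obtain,'' and you have correctly carried out the role swap, including the key integral identity $\int_\sigma f_n g_{p'}\,d\mu = d^*(d)\,e^*_{p'}(P_\sigma(d_n))$, whose cancellations of the normalizing factors in $\pi$, $\pi^*$, and $\mu$ you have verified. The remaining steps --- the monotonization of $\fn{m}$, the inductive thinning of the index sequence, the application of Lemma \ref{thm:james_dual_metaconvergence} with accuracy $\epsilon/(2B)$ to get the bound $2\lceil 2B/\epsilon\rceil^2 \le 8B^2\lceil 1/\epsilon\rceil^2$, and the final triangle inequality --- are precisely the previous lemma's proof with $d_k$, $e^*_p$, and Lemma \ref{thm:james_metaconvergence} replaced by $e^*_{p'}$, $d_n$, and Lemma \ref{thm:james_dual_metaconvergence}.
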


\begin{lemma}
  For each $n$ and any $\sigma\subseteq\Omega$ with $\mu(\sigma)<\epsilon/B2^n$, $\int_\sigma|\rho_n|d\mu<\epsilon$.
\end{lemma}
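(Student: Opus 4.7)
The plan is to deduce this absolute-continuity estimate directly from the pointwise bound on $f_n = \pi(d_n)$ that was already computed in the previous subsection. Recall that, by the definitions of $\pi$ and of the canonical element $d = \sum_{j\leq K} 2^{-j-1}|d_j|$, we have
\[
\|f_n\|_\infty = \sup_i\left|\frac{\gamma^*_i(d_n) d^*(d)}{\gamma^*_i(d)}\right| \leq B\cdot 2^n,
\]
because $\gamma^*_i(d) \geq 2^{-n-1}\gamma^*_i(|d_n|)$ for every coordinate $i$ that contributes to $d_n$, and $d^*(d)\leq B$.

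With this pointwise bound in hand, the lemma follows from the trivial sup-norm estimate: if $\rho_n$ is identified with its density $f_n$ (so that $\int_\sigma|\rho_n|\,d\mu = \int_\sigma|f_n|\,d\mu$), then for any measurable $\sigma\subseteq\Omega$ with $\mu(\sigma) < \epsilon / (B 2^n)$ we have
\[
\int_\sigma|f_n|\,d\mu \;\leq\; \|f_n\|_\infty\,\mu(\sigma) \;<\; B\cdot 2^n\cdot\frac{\epsilon}{B 2^n} \;=\; \epsilon.
\]

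There is no genuine obstacle: the whole content of the lemma is that the weight $2^{-n-1}$ placed on $d_n$ in the definition of $d$ was chosen precisely to give a finite but controlled $L^\infty$-bound on $f_n$, and then absolute continuity with respect to $\mu$ is automatic for bounded functions on a finite measure space. The analogous statement for $g_p$ (needed to invoke Theorem \ref{thm:cited}) is proved in exactly the same way using $\|g_p\|_\infty \leq B 2^p$.
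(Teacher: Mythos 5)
Your proof is correct and takes exactly the same route as the paper: the paper's entire proof is the one-line remark that the claim is immediate from $\|\rho_n\|_\infty \leq B2^n$, which is precisely the sup-norm estimate you spell out. You also correctly identify $\rho_n$ with the earlier $f_n = \pi(d_n)$ despite the paper's notational drift.
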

\begin{proof}
  Immediate since $||\rho_n||_\infty\leq B2^n$.
\end{proof}

\begin{lemma}
  For each $p$ and any $\sigma\subseteq\Omega$ with $\mu(\sigma)<\epsilon/B2^p$, $\int_\sigma|\lambda_p|d\mu<\epsilon$.
\end{lemma}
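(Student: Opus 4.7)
The plan is to mirror the preceding lemma for $\rho_n$, whose proof was a single line from the uniform $L^\infty$ bound $||\rho_n||_\infty\leq B2^n$. The same pattern works here because the analogous bound $||\lambda_p||_\infty\leq B2^p$ has already been established earlier in Section \ref{sec:proof} as part of the computation
\[||g_p||_\infty = \sup_i\left|\frac{e^*_p(\omega_i)d^*(d)}{d^*(\omega_i)}\right|\leq B2^p,\]
which used the definition $d^* = \sum_j 2^{-j-1}|e^*_j|$ to bound $d^*(\omega_i)$ from below by a multiple of $|e^*_p(\omega_i)|$ and so cancel the $e^*_p(\omega_i)$ factor in the numerator. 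Since $\lambda_p$ is just $g_p$ under the notation of the current subsection, this pointwise bound is available without further work.

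Given the bound, the lemma is immediate from the trivial estimate: for any measurable $\sigma\subseteq\Omega$,
\[\int_\sigma|\lambda_p|\,d\mu\leq ||\lambda_p||_\infty\cdot\mu(\sigma)\leq B2^p\cdot\frac{\epsilon}{B2^p}=\epsilon.\]

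There is no genuine obstacle in this argument. It is the exact dual companion of the $\rho_n$ lemma and, unlike the fluctuation bounds earlier in the subsection, it uses none of the metastability machinery from Lemmas \ref{thm:james_metaconvergence} and \ref{thm:james_dual_metaconvergence}; the entire content is the $L^\infty$ estimate on $\pi^*(e^*_p)$ that was already computed in the construction of the finite measure space.
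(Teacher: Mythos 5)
Your proof is correct and matches the paper's (implicit) approach: the paper omits the proof precisely because it is the immediate analogue of the preceding $\rho_n$ lemma, using the $L^\infty$ bound $||g_p||_\infty\leq B2^p$ established in the construction of the measure space. The only cosmetic point is that the final step should be a strict inequality, since the hypothesis $\mu(\sigma)<\epsilon/B2^p$ is strict, giving $\int_\sigma|\lambda_p|\,d\mu< \epsilon$ as required.
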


\subsection{Putting it Together}

\begin{theorem}
  If $K\geq f_\omega(2^{29}B^4+5)$ then the local unconditionality constant of a basis of $J_K$ is $>B$.
\end{theorem}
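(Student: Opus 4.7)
The plan is to argue by contradiction: assume $J_K$ has a basis $(\omega_i)_{i\le K}$ with unconditional constant at most $B$, and derive a contradiction from Theorem \ref{thm:cited}. The setup of the preceding subsections has been arranged exactly for this: given such a basis, the finite measure space $(\Omega,\mu)$, the embeddings $\pi,\pi^*$, and the two sequences $f_n=\pi(d_n)$, $g_p=\pi^*(e^*_p)$ are already in hand, with the key integral identity
\[\int f_n g_p\,d\mu = e^*_p(d_n)\cdot d^*(d),\]
which equals $d^*(d)\in[1/4,B]$ when $p\le n$ and $0$ when $p>n$. This is the finitary analogue of the doubly-inconsistent iterated limits (giving $0$ versus something bounded below) that drove the classical ultraproduct proof.

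Next I would check the five hypotheses of Theorem \ref{thm:cited} one at a time, using the lemmas from Section \ref{sec:proof}. The two bounded-fluctuations hypotheses are supplied by the two lemmas on $(f_n g_p)_n$ and $(f_n g_p)_p$ with bound $8B^2\lceil 1/\epsilon\rceil^2$; the two absolute-continuity conditions follow from the $L^\infty$ estimates $\|f_n\|_\infty\le B2^n$ and $\|g_p\|_\infty\le B2^p$ derived in the setup (yielding $\int_\sigma|f_n|\,d\mu\le B2^n\mu(\sigma)<\epsilon$ whenever $\mu(\sigma)<\epsilon/B2^n$, and similarly for $g_p$); and the uniform $L^1$ bounds $\|f_n\|_1,\|g_p\|_1\le B$ were established in the setup.

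With the hypotheses in place, I choose $\epsilon$ as a small absolute constant, specifically small enough that $20\epsilon<1/4$, and invoke Theorem \ref{thm:cited}. It produces $m<s$ and $q<l$ with $s,l\le f_\omega(2^{22}B^4\lceil 1/\epsilon\rceil^4+5)$ satisfying
\[\left|(f_m g_s)(\Omega)-(f_l g_q)(\Omega)\right|<20\epsilon.\]
But the integral identity forces $(f_m g_s)(\Omega)=0$ (since $s>m$) and $(f_l g_q)(\Omega)=d^*(d)\ge 1/4$ (since $l>q$), so the left-hand side is at least $1/4>20\epsilon$, a contradiction. Since $\epsilon$ is an absolute constant, the factor $\lceil 1/\epsilon\rceil^4$ absorbs into the constant in the exponent to yield the claimed $f_\omega(2^{29}B^4+5)$, and the hypothesis $K\ge f_\omega(2^{29}B^4+5)$ guarantees $s,l\le K$, so the indices used correspond to genuine elements of $J_K$ and $J_K^*$.

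The main obstacle is purely bookkeeping: confirming that each of the five hypotheses of Theorem \ref{thm:cited} is met with exactly the right constants (in particular that the $B2^n$ factor in $\|f_n\|_\infty$ lines up precisely with the $\epsilon/B2^n$ threshold in the absolute-continuity hypothesis), and that the arithmetic in the exponent absorbs $\lceil 1/\epsilon\rceil^4$ into $2^{29}$. No new conceptual work is required; the quantitative heavy lifting was already done in Theorem \ref{thm:cited}, and this final theorem is the assembly step that recasts the classical double-limit contradiction as a contradiction at explicit, effectively bounded indices.
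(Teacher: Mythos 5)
Your proposal matches the paper's proof essentially verbatim: assume a basis with unconditional constant $\le B$, construct $(\Omega,\mu)$, $\pi$, $\pi^*$, $f_n=\pi(d_n)$, $g_p=\pi^*(e^*_p)$, verify the five hypotheses of Theorem \ref{thm:cited} from the preceding lemmas, apply it with $\epsilon=1/80$, and derive the contradiction from the identity $\int f_n g_p\,d\mu=e^*_p(d_n)\,d^*(d)$, which vanishes when $p>n$ and is at least $1/4$ when $p\le n$. The one point you flagged as bookkeeping and left open does not actually check out as stated: with $\epsilon=1/80$ one has $2^{22}\lceil 1/\epsilon\rceil^4=2^{22}\cdot 80^4=2^{38}\cdot 5^4\approx 2^{47}$, not $\le 2^{29}$, so the constant in the theorem statement should be larger---this is an arithmetic slip present in the paper itself rather than a defect specific to your argument, but since you explicitly deferred it you should be aware it needs repair.
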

\begin{proof}
  Suppose $J_K$ has a basis $(\omega_i)_{i\leq K}$ with unconditional constant $\leq B$.  We construct $f_n, g_p$ as described above, and apply Theorem \ref{thm:cited} with $\epsilon=1/80$ to obtain $m<s\leq K$ and $q<l\leq K$ so that
\[|\int(f_mg_s)d\mu-\int(f_lg_q)d\mu|<1/4.\]
But since $m<s$, $\int(f_mg_s)d\mu=\int\pi(d_m)\pi^*(e^*_s)d\mu=e^*_s(d_m)d^*(d)=0$.  On the other hand, since $q<l$, $\int(f_lg_q)d\mu=e^*_q(d_l)d^*(d)\geq 1/4$, which is a contradiction.
\end{proof}

\bibliographystyle{plain}
\bibliography{../../Bibliographies/main}
\end{document}